\title[Measures of maximal entropy for suspension semi-flows]{Measures of maximal entropy on subsystems of topological suspension semi-flows}
\author{Tamara Kucherenko}\address{Department of Mathematics, The City College of New York, New York, NY, 10031}\email{tkucherenko@ccny.cuny.edu}
\author{Daniel J. Thompson}\address{Department of Mathematics, Ohio State University, Columbus, OH, 43210}\email{thompson.2455@osu.edu}
\date{\today}
\thanks{T.K. is supported by grants from the Simons Foundation \#430032 and from the PSC-CUNY TRADA-47-18. D.T. is supported by NSF grants DMS-$1461163$ and DMS-$1954463$.}
\subjclass[2010]{37D35, 37A35}
\newtheorem{thm}{Theorem}[section]
\newtheorem{prop}[thm]{Proposition}
\newtheorem{cor}[thm]{Corollary}
\theoremstyle{definition}
\numberwithin{equation}{section}
\def\Pb{\ifmmode{\Bbb P}\else{$\Bbb P$}\fi}
\def\Z{\ifmmode{\Bbb Z}\else{$\Bbb Z$}\fi}
\def\Q{\ifmmode{\Bbb Q}\else{$\Bbb Q$}\fi}
\def\C{\ifmmode{\Bbb C}\else{$\Bbb C$}\fi}
\def\R{\ifmmode{\Bbb R}\else{$\Bbb R$}\fi}
\def\N{\ifmmode{\Bbb N}\else{$\Bbb N$}\fi}
\def\H{\ifmmode{\Bbb H}\else{$\Bbb H\Bbb N$}\fi}
\def\M{\mathcal M}
\begin{document}

\begin{abstract}
Given a compact topological dynamical system $(X, f)$ with positive entropy and upper semi-continuous entropy map, and any closed invariant subset $Y \subset X$ with positive entropy, we show that there exists a  continuous roof function such that the set of measures of maximal entropy for the suspension semi-flow over $(X,f)$ consists precisely of the lifts of measures which maximize entropy on $Y$. This result has a number of implications for the possible size of the set of measures of maximal entropy for topological suspension flows. In particular, for a suspension flow on the full shift on a finite alphabet, the set of ergodic measures of maximal entropy may be countable, uncountable, or have any finite cardinality.
\end{abstract}

\maketitle

\setcounter{secnumdepth}{2}

\section{Introduction}

We investigate which measures can achieve maximal entropy in the class of topological suspension semi-flows. Our result is a kind of universality result for measures of maximal entropy (MME) for suspension semi-flows.  We show in Theorem \ref{main} that for \emph{any} positive entropy compact topological dynamical system $(X, f)$ with upper semi-continuous entropy map, and \emph{any} closed invariant subset $Y \subset X$ with positive entropy, we can find a suspension semi-flow over $(X, f)$ whose MME  are exactly the lifts of the MME 
for the subsystem $(Y, f|_{Y})$.  The assumption that the entropy map is upper semi-continuous is needed for existence of the MME and is thought of as a very weak expansivity condition.  The condition that the \mbox{subsystem} has positive entropy is essential since any suspension flow over $X$ has positive entropy, so only positive entropy measures in the base can possibly lift to MME for the flow.

Universality and structure results for suspension flows are well known in the measure-theoretic category. For example, Rudolph \cite{dR76} proved the famous result that every measure-preserving flow is isomorphic to a suspension flow whose roof takes only two values.  Quas and Soo \cite{QS} proved the following measure-theoretic universality result for H\"older suspension flows over the full shift: any measure of smaller entropy than the flow can be embedded isomorphically into the suspension flow. The main result of our paper can be interpreted as a universality phenomenon in the topological category.

After our main result is proved, it can be used as a machine for producing examples of suspension flows with various different prescribed behaviors through careful choices for the subsystem $Y$.
When the base is the full shift $(\Sigma, \sigma)$, by choosing  $Y \subset \Sigma$ appropriately, we can build topological suspension flows over the shift $(\Sigma, \sigma)$ with any of the following properties: the MME is unique but not fully supported; there are multiple MMEs with the same support; the MMEs (unique or not) are supported on a minimal subsystem; there is any prescribed finite number of ergodic MMEs; the set of ergodic MMEs is countably infinite; the set of ergodic MMEs is uncountable.
These statements contrast sharply with the well-known situation of a suspension flow over the full shift on a finite alphabet with H\"older continuous roof function in which case the MME  is unique and fully supported. Note that every continuous suspension flow over a full shift with a fixed alphabet size is orbit equivalent. Thus, our results show that a continuous orbit equivalence does not preserve finiteness, or even countability,  of the set of MME.

We discuss previous results in this direction and our approach. We extensively generalize our previous work \cite{KTh} in which we proved that for the full shift $(\Sigma, \sigma)$ and any positive entropy subshift of finite type $Y \subset \Sigma$, there exists a roof function such that the MMEs for the suspension flow over the full shift are exactly the lifts of the MMEs for the subshift. In the current paper, we remove the restriction that the subset $Y$ is a shift of finite type, and we remove the need for the ambient space to be symbolic, allowing any topological dynamical system with upper semi-continuous entropy map.

The proof in \cite{KTh} was based on an explicit combinatorial description of the roof function, and hands-on pressure estimates. That argument has the advantage of giving explicit and constructive examples for which uniqueness of the MME fails in the class of topological suspension flows. However, that argument relied heavily on the structure of subshifts of finite type, and did not provide examples with multiple MME on a single transitive component. Our explicit construction does not seem to carry over even to sofic subshifts of the full shift, much less to general subshifts or topological dynamical systems.

We note that Iommi and Velozo \cite{IV} have recently given an independent proof that suspension flows over the full shift on a finite alphabet can have an uncountable set of ergodic MME. They show that this phenomenon is dense in the sense that for any continuous roof function over a compact shift of finite type, there is a small continuous time change so that the resulting flow has an uncountable collection of ergodic MME.

The approach of this paper is a generalization and refinement of a result by Markley and Paul \cite{MP82}, and is based on the theory of tangent functionals. The MMEs for the flow are described in terms of equilibrium states of the base system (see \S\ref{s.flows}).  We show that for systems with upper semi-continuous entropy map, we can find a function $\tau$ whose equilibrium states are the MME on a given subsystem, and in addition that the function is non-positive and vanishes only on the subsystem. Markley and Paul proved a version of this result under the hypothesis that the ambient space is the full shift, and without obtaining the key conclusion needed for the application to suspension flows, which is that the function is non-positive. The desired roof function, which must be strictly positive, is obtained by adding a suitable normalization constant to $-\tau$.

The paper is structured as follows. In \S \ref{s.prelim}, we collect our preliminaries. In \S \ref{s.mainresult}, we state and prove our main result. In \S \ref{s.applications}, we apply our main result to questions about the set of possible MMEs for suspensions over the full shift.

\section{Preliminaries} \label{s.prelim}
\subsection{The pressure function and tangent functionals} \label{tangentfunctionals}
We consider a continuous function $f:X\to X$ on a compact metric space $X$. We call the pair $(X, f)$ a topological dynamical system. We denote by $C(X)$ the Banach
space of all continuous real-valued functions of $X$ with the supremum norm. The dual space $C^\ast (X)$ consists of all Radon measures on $X$. Let $\M(f) \subset C^\ast(X)$ be the subspace of $f$-invariant probability measures.

For $\phi\in C(X)$, the \emph{topological pressure} can be defined by the Variational Principle to be the quantity
\begin{equation}\label{VarPr}
P(\phi)=\sup_{\mu\in\M(f)}\left\{h_\mu(f)+\int \phi\, d\mu\right\},
\end{equation}
where $h_\mu(f)$ is the measure-theoretic entropy of $\mu$. The topological pressure can also be defined using $(n, \epsilon)$-separated or spanning sets, see \cite{pW82} for a detailed treatment. The number $P(0)=\sup_{\mu\in\M(f)}\{h_\mu(f)\}$ is called the \emph{topological entropy} of $f$ and is denoted by $h_{\rm top}(f)$. If there exists a measure $\mu\in\M(f)$ at which the supremum in (\ref{VarPr}) is attained it is called an \emph{equilibrium state} of $\phi$. The set of all equilibrium states of $\phi$ is a convex subset of $\M(f)$ which is compact with respect to the weak$^\ast$-topology. However, in general it may be empty \cite{Gu}. If the entropy map $\mu\mapsto h_\mu(f)$ is upper semi-continuous on $\M(f)$ then for any $\phi\in C(X)$ the topological pressure $P(\phi)< \infty$ and  the set of equilibrium states is not empty. This condition will be a hypothesis of our main result.

The \emph{pressure function} (with respect to $f$) is the map $P:C(X)\to \R \cup\{\infty\}$ which sends $\phi$ to $P(\phi)$. The pressure function is continuous, convex and satisfies the following properties \cite{pW82}:
\begin{enumerate}
  \item $P$ is Lipschitz, i.e. $|P(\phi)-P(\psi)|\le\|\phi-\psi\|$ for any $\phi,\psi\in C(X)$;
  \item $P$ is increasing, i.e. $P(\phi)\le P(\psi)$ whenever $\phi\le \psi$;
  \item $P(t+\phi+\psi\circ f-\psi)=t+P(\phi)$ for any $t\in\R$ and $\phi,\psi\in C(X)$.
\end{enumerate}

If $f$ has finite topological entropy, then the function $P$ is real-valued. This is the case for us since we assume upper semi-continuity of the entropy map. If $f$ has infinite topological entropy, then $P(\phi)=\infty$ for all $\phi\in C(X)$. We will make use of the following facts from the theory of tangent functionals for convex functions. For a continuous convex function $Q:C(X)\to \R$, a measure $\nu\in C^\ast(X)$ is called \emph{$Q$-bounded} if there is $C\in\R$ such that for any $\psi\in C(X)$ we have
\begin{equation}\label{Q-bnd}
  \int\psi\, d\nu\le Q(\psi)+C.
\end{equation}
We say that $\nu\in C^\ast(X)$ is a \emph{tangent} to $Q$ at $\phi\in C(X)$ if for any $\psi\in C(X)$ we have
\begin{equation}\label{Tangent_def}
 \int\psi\, d\nu\le Q(\phi+\psi)-Q(\phi).
\end{equation}
A simple application of the Hahn-Banach theorem shows that the set of tangents is non-empty at any $\phi\in C(X)$. We can approximate any $Q$-bounded measures by tangents using the following result which is a special case of Israel's theorem \cite{I}. For purposes of exposition and to make the paper self-contained, we include a short proof.
\begin{prop} [Special case of Israel's Theorem] \label{Israel}
Let $\varepsilon>0$, $\mu\in C^\ast(X)$ be a $Q$-bounded measure, and $V\subset C(X)$ be a closed linear subspace. Then there exists a function $\phi\in V$ and a tangent $\nu$ to $Q$ at $\phi$ such that $\|\mu-\nu\|_{C^\ast(V)}<\varepsilon$.
\end{prop}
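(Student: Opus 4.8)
The plan is to combine Ekeland's variational principle with a Hahn--Banach separation argument, which is the standard route to Israel-type approximation results. I first record the facts I will use about the convex function $Q$. Since $Q$ is real-valued, continuous and convex on the Banach space $C(X)$, it is locally Lipschitz, and at each $\phi$ the set of tangents in the sense of (\ref{Tangent_def}) coincides with the subdifferential $\partial Q(\phi)$, which is a nonempty, convex, weak$^\ast$-compact subset of $C^\ast(X)$. Moreover the one-sided directional derivative $\psi \mapsto \lim_{t \to 0^+} t^{-1}(Q(\phi + t\psi) - Q(\phi))$ is sublinear and equals the support function of $\partial Q(\phi)$, namely $\max_{\nu \in \partial Q(\phi)} \int \psi\, d\nu$. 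These are standard properties of continuous convex functions on Banach spaces.

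First I would set up a minimization on $V$. Define $g : V \to \R$ by $g(\psi) = Q(\psi) - \int \psi\, d\mu$; by the $Q$-boundedness (\ref{Q-bnd}) of $\mu$ we have $g \ge -C$, so $g$ is continuous, convex and bounded below on the (complete) space $V$. Pick $\phi_0 \in V$ with $g(\phi_0) < \inf_V g + \varepsilon/2$ and apply Ekeland's variational principle with radius parameter $1$ to obtain $\phi \in V$ with $g(w) \ge g(\phi) - (\varepsilon/2)\|w - \phi\|$ for all $w \in V$. Writing $p$ for the directional derivative of $Q$ at this $\phi$, I substitute $w = \phi + t\psi'$ with $\psi' \in V$ and let $t \to 0^+$; rearranging turns the slope bound into $\int \psi'\, d\mu \le p(\psi') + (\varepsilon/2)\|\psi'\|$ for every $\psi' \in V$. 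In other words, on $V$ the functional $\mu$ is dominated by the sublinear functional $p + (\varepsilon/2)\|\cdot\|$. I expect this to be the crux of the argument and the main obstacle to carrying out by hand: an approximate minimizer alone carries no subgradient information, and it is precisely Ekeland's principle that upgrades it to a point of almost-tangency at which the slope is controlled.

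With this domination in place, I would extend $\mu|_V$ by Hahn--Banach to a linear functional $\ell$ on $C(X)$ satisfying $\ell|_V = \mu|_V$ and $\int \psi\, d\ell \le p(\psi) + (\varepsilon/2)\|\psi\|$ for all $\psi \in C(X)$; since $p$ is bounded, $\ell \in C^\ast(X)$. It remains to replace $\ell$ by a genuine tangent. Let $K = \partial Q(\phi)$ and let $B$ be the closed unit ball of $C^\ast(X)$; both are convex and weak$^\ast$-compact, hence so is $K + (\varepsilon/2)B$, and its support function is exactly $p + (\varepsilon/2)\|\cdot\|$. Were $\ell \notin K + (\varepsilon/2)B$, weak$^\ast$ separation (whose continuous linear functionals are the evaluations at elements of $C(X)$) would yield $\psi \in C(X)$ with $\int \psi\, d\ell > p(\psi) + (\varepsilon/2)\|\psi\|$, contradicting the domination; hence $\ell = \nu + \rho$ with $\nu \in K$ a tangent and $\|\rho\| \le \varepsilon/2$. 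Finally, since $\ell$ agrees with $\mu$ on $V$, for any $\psi \in V$ with $\|\psi\| \le 1$ we get $|\int \psi\, d(\mu - \nu)| = |\int \psi\, d(\ell - \nu)| \le \|\rho\| \le \varepsilon/2 < \varepsilon$, so $\|\mu - \nu\|_{C^\ast(V)} < \varepsilon$. Beyond routine bookkeeping, the only points demanding care are the weak$^\ast$-compactness of $K + (\varepsilon/2)B$ — so that separation places $\ell$ inside the set rather than merely its closure — and the identification of tangents with the subdifferential.
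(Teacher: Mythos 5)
Your proof is correct, and it reaches the same two milestones as the paper's --- an almost-minimizing point $\phi\in V$ at which the slope of $Q-\mu$ is controlled, followed by a Hahn--Banach step producing the tangent --- but by genuinely different means in the second half. For the first half the difference is cosmetic: the paper does not cite Ekeland's variational principle but proves exactly that statement from scratch, normalizing so that $\mu=0$ and $C=0$ and running a nested-sets argument with $S(\psi)=\{\xi\in V: Q(\psi)-Q(\xi)\ge\varepsilon\|\psi-\xi\|\}$ to produce $\phi$ with $S(\phi)=\{\phi\}$, which is precisely your conclusion $g(w)>g(\phi)-\varepsilon\|w-\phi\|$ for $w\ne\phi$. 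The second half is where the routes diverge. The paper performs a single geometric Hahn--Banach separation in the product space $C(X)\times\R$, separating the open convex set $A=\{(\xi,y):\xi\in V,\ y<Q(\phi)-\varepsilon\|\xi-\phi\|\}$ from the strict epigraph $B=\{(\xi,y):y>Q(\xi)\}$; the separating hyperplane $\lambda(\xi,y)=y-\int\xi\,d\nu$ yields simultaneously the tangency inequality on all of $C(X)$ and the norm bound on $V$, with no need for directional derivatives, the max formula, or any weak$^\ast$ compactness. You instead pass to the one-sided directional derivative $p$, extend $\mu|_V$ by the analytic (dominated) Hahn--Banach theorem under $p+(\varepsilon/2)\|\cdot\|$, and then locate the extension inside $\partial Q(\phi)+(\varepsilon/2)B$ by weak$^\ast$ separation against a support function. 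This is a standard and valid convex-analysis decomposition, and you correctly flag the two points that need care (weak$^\ast$ compactness of $K+(\varepsilon/2)B$ and the identification of tangents with subgradients, the latter being immediate from the paper's definition~(\ref{Tangent_def})); it even gives the strict bound $\varepsilon/2<\varepsilon$ cleanly. What the paper's version buys is self-containedness --- one separation of two open convex sets, no appeal to the max formula or Banach--Alaoglu --- which is the stated reason the authors include the proof at all; what yours buys is modularity, since each ingredient (Ekeland, dominated extension, support-function separation) is an off-the-shelf theorem.
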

\begin{proof}
Since $\mu$ is $Q$-bounded, there is $C\in \R$ such that $\int\xi d\mu\le Q(\xi)+C$ for all $\xi\in C(X)$. We may assume that both $\mu=0$ and $C=0$ by replacing the function $Q(\xi)$ with $Q(\xi)-\int\xi d\mu-C$. For $\psi\in V$, we define
\begin{equation}\label{S(omega)_def}
  S(\psi)=\{\xi\in V:Q(\psi)-Q(\xi)\ge \varepsilon\|\psi-\xi\|\}
\end{equation}
Since $\psi\in S(\psi)$ and $Q$ is continuous, $S(\psi)$ is a nonempty closed subset of $V$. Moreover, it is easy to see that $S(\xi)\subset S(\psi)$ whenever $\xi\in S(\psi)$.

We pick any starting point $\psi_0\in V$ and build a sequence $(\psi_n)_{n\ge 1}$ such that $\psi_n\in S(\psi_{n-1})$ and
\begin{equation}\label{psi_n_def}
 Q(\psi_n)<\inf_{\xi\in S(\psi_{n-1})}Q(\xi)+\frac{\varepsilon}{2^n}.
\end{equation}
Note that $S(\psi_n)$ is a sequence of nested closed sets whose diameters tend to zero. Indeed, for $\xi\in S(\psi_n)$ we have
\begin{equation}\label{Size_S(psi_n)}
  \varepsilon\|\psi_n-\xi\|\le Q(\psi_n)-Q(\xi)\le \inf_{ S(\psi_{n-1})}Q+\frac{\varepsilon}{2^n} -Q(\xi)\le \frac{\varepsilon}{2^n},
\end{equation}
where the last inequality follows from the fact that $\xi$\ is also in $S(\psi_{n-1})$. In particular,  we obtain that $(\psi_n)$ is Cauchy and hence there exists $\phi=\lim \psi_n$. Furthermore, (\ref{Size_S(psi_n)}) implies that $S(\phi)$ contains only one point $\phi$, since $S(\phi)\subset S(\psi_{n})$ for all $n$.

Now we define two subsets in $C(X)\times \R$ by
\[
  A=\{(\xi,y):\xi\in V,\,\, y<Q(\phi)-\varepsilon\|\xi-\phi\|\}\quad\text{and}\quad B=\{(\xi,y): y>Q(\xi)\}.
\]
It follows from the continuity and convexity of $Q$ that $A$ and $B$ are open and convex. If $(\xi,y)\in A\cap B$ then $\xi\in S(\phi)$ and we must have $\xi=\phi$. Since $(\phi,y)$ cannot lie in both $A$ and $B$, we conclude that $A\cap B$ is empty. By the Hahn-Banach theorem there exists a linear functional $\lambda$ on  $C(X)\times \R$ and $t\in\R$ such that $\lambda(a)<t<\lambda(b)$ for all $a\in A$ and $b\in B$. With a suitable choice of $t\in\R$ and $\nu\in C^*(X)$, after a possible rescaling of $\lambda$ by a positive constant we can write $\lambda(\xi,y)=y-\int\xi d\nu$. Then for any $\xi\in V$ we have $y-\int\xi d\nu< t$ as long as $y<Q(\phi)-\varepsilon\|\xi-\phi\|$. This gives
\begin{equation}\label{norm_estimate}
 Q(\phi)-\varepsilon\|\xi-\phi\|-\int\xi d\nu\le t.
\end{equation}
On the other hand, for any $\xi\in C(X)$ we have $t<y-\int\xi d\nu$ as long as $y>Q(\xi)$. In that case we arrive at
\begin{equation}\label{tangent_estimate}
t\le Q(\xi)-\int\xi d\nu.
\end{equation}
Taking $\xi=\phi$ in both inequalities above we see that $t=Q(\phi)-\int\phi d\nu$. With $\psi=\xi-\phi$ this allows us to rewrite (\ref{norm_estimate}) and (\ref{tangent_estimate}) as
\begin{align*}
  &\int \psi d\nu\le\varepsilon\|\psi\|,\,\text{for any}\,\psi\in V;  \\
  &\int\psi d\nu\le Q(\psi+\phi)-Q(\phi),\,\text{for any}\,\psi \in C(X).
\end{align*}
The first inequality provides the required norm estimate $\|\nu\|_{C^*(V)}\le\varepsilon$ and the second one shows that $\nu$ is a tangent to $Q$ at $\phi$.
\end{proof}

Since the pressure function $P$ is continuous and convex, there exists a tangent to the pressure at every $\phi\in C(X)$. From properties (1) and (3) of the pressure  one can deduce that any tangent to $P$ must be a positive $f$-invariant probability measure on $X$. On the other hand, if $\mu\in\M (f)$ is an equilibrium state of $\phi$, then by the Variational Principle for any $\psi\in C(X)$ we have
$$P(\phi+\psi)-P(\phi)\ge h_\mu(f)+\int(\phi+\psi)d\mu-h_\mu(f)-\int\phi d\mu=\int\psi d\mu,$$
and hence $\mu$ is tangent to $P$ at $\phi$. It follows that the set of equilibrium states for $\phi$ is a subset of tangents to $P$ at $\phi$. It was proved in \cite[Theorem 9.15]{pW82} that the opposite inclusion holds if the entropy map $\mu\mapsto h_\mu(f)$ is upper semi-continuous on the set of tangents to $P$ at $\phi$. In \cite[Theorem 5]{pW92}, it was shown that this statement is an `if and only if'.

\subsection{Suspension Semiflows}\label{s.flows}
We recall some basic facts about suspension semiflows. Given a topological dynamical system $(X, f)$ and a continuous function $\rho:X\to (0,\infty)$ consider the quotient space
\begin{equation}\label{SuspSpace}
  ^{\textstyle X_\rho=\{(x,s): x \in X,  0\le s\le \rho(x) \}}\big/_{\textstyle \sim}
\end{equation}
obtained by the equivalence relation $(x,\rho(x))\sim (f(x),0)$ for every $x\in X$. We refer to $X$ as the \emph{base}, to $\rho$ as the \emph{roof function} and to $X_\rho$ as the \emph{suspension space relative to $\rho$}. The \emph{suspension semiflow} $\Phi = (\varphi_t)_{t\ge 0}$ on $X_\rho$ associated to $(f,X,\rho)$ is defined locally by $\varphi_t(x,s)=(x, s+t)$. We extend this definition to all $t\in [0, \infty)$ by setting
\begin{equation} \label{semiflow}
\varphi_t(x,s)=\left(f^n(x), t+s-\sum_{k=0}^{n-1} \rho(f^k(x))\right),
\end{equation}
where $n\in\N$ is uniquely determined by  $\sum_{k=0}^{n-1} \rho(f^k(x))\le t+s< \sum_{k=0}^{n} \rho(f^k(x))$. We call a flow of this type a \emph{topological suspension semiflow} to emphasize that we are working with topological dynamical systems and continuous roof functions.

Since $\rho$ is bounded away from zero there is a natural identification between the space $\M(\Phi)$ of $\Phi$-invariant
probability measures on $X_\rho$ and the space $\M(f)$ of $f$-invariant probability measures on $X$. If $m$ denotes the Lebesgue measure on $\R$ then the map
\begin{equation}\label{lifting of measures}
 \mu \mapsto \tilde \mu =\frac{(\mu\times m)|_{X_\rho}}{\int_X \rho\, d\mu}
\end{equation}
is a bijection from $\M(f)$ to $\M(\Phi)$. We say that $\tilde \mu$ is the \emph{lift} of the measure $\mu$. Abramov \cite{lA59} established a relation between the entropies of the measure $\mu$ and the lifted measure $\tilde\mu$, namely
\begin{equation}\label{Abramov}
h_{\tilde \mu}(\varphi_t)=\frac{t\cdot h_\mu(f)}{\int_X\rho\, d\mu}.
\end{equation}
Therefore, the entropy $h_{\tilde\mu}(\Phi)$ of the measure $\tilde\mu$ with respect to the flow, which is defined as the entropy of the time-one map $\varphi_1$ satisfies
\begin{equation}\label{Abramov2}
h_{\tilde\mu}(\Phi)=\frac{h_\mu(f)}{\int_X\rho\, d\mu}.
\end{equation}

We remark that if $f:X\to X$ is a homeomorphism then the expression (\ref{semiflow}) with $n\in\Z$ defines a flow $(\varphi_t)_{t\in\R}$ on $X_\rho$. In this case the bijection between the measure spaces (\ref{lifting of measures}) remains the same and Abramov's formula (\ref{Abramov}) is valid with $t$ replaced by $|t|$.

The measures of maximal entropy for the semiflow $(X_\rho,\Phi)$ can be described in terms of equilibrium states of a constant multiple of the roof function on the base space \cite{BR, PP90}. Precisely, for $c\in \R$ consider the function $c \to P(-c \rho)$. It follows from (\ref{VarPr}) that this function is real-valued and strictly decreasing, and hence there exists $c$ with $P(-c \rho)=0$. Suppose $-c\rho$ has an equilibrium state $\mu$. Denote by $\tilde{\mu}$ the image of $\mu$ given by (\ref{lifting of measures}). We claim that $\tilde{\mu}$ is a measure of maximal entropy for the flow $\Phi$. Indeed, let $\tilde{\nu}$ be any other $\Phi$-invariant measure on $X_\rho$ and $\nu$ be the corresponding $f$-invariant measure on the base. By the Variational Principle (\ref{VarPr}) we have
\[
0=h_\mu + \int-c\rho d\mu \geq h_\nu + \int-c\rho d\nu
\]
 with equality if and only if $\nu$ is an equilibrium state for $-c\rho$. It follows from (\ref{Abramov2}) that
\[
h_{\tilde{\mu}}(\Phi)=\frac{h_\mu(f)}{\int \rho d \mu} \geq \frac{h_{\nu}(f)}{\int \rho d \nu}=h_{\tilde{\nu}}(\Phi)
\]
and $\tilde{\mu}$ is a measure of maximal entropy for the flow. Conversely, any measure of maximal entropy for $(X_\rho,\Phi)$ corresponds to an equilibrium state of $-c\rho$ on the base transformation $(X, f)$ with $c=h_{\rm top}(\Phi)$.

\section{Main Result} \label{s.mainresult}
We state and prove our main theorem.
\begin{thm}\label{main}
Let $(X, f)$ be a compact topological dynamical system such that $h_{\rm top}(f)>0$ and the entropy map $\mu\mapsto h_\mu(f)$ is upper semi-continuous. Let  $Y\subset X$ be a closed $f$-invariant subset with $h_{\rm top}(f|_Y)>0$.  There exists a continuous roof function $\rho:X\to (0,\infty)$ so that the measures of maximal entropy for the suspension semi-flow $(X_{\rho},\Phi)$ (or suspension flow if $f$ is a homeomorphism) are exactly the lifts of the measures of maximal entropy for $Y$.
\end{thm}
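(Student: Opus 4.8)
My strategy is to push everything back to the base system via the pressure description of MMEs from \S\ref{s.flows}, reduce the problem to a single attainment statement about the pressure function, and then invoke the tangent-functional technology of Proposition \ref{Israel}. Write $h_Y:=h_{\rm top}(f|_Y)>0$. I first claim it suffices to produce a continuous $g\colon X\to\R$ with $g\le 0$, $g|_Y\equiv 0$, whose equilibrium states are exactly the MMEs of $(Y,f|_Y)$. Given such a $g$, put $\rho:=h_Y-g$; then $\rho\ge h_Y>0$ is an admissible roof. Since the equilibrium states of $g$ are MMEs of $Y$, they are supported on $Y$ with entropy $h_Y$ and $\int g\,d\mu=0$, so $P(g)=h_Y$, and property (3) of the pressure gives $P(-\rho)=P(g-h_Y)=P(g)-h_Y=0$. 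Hence $h_{\rm top}(\Phi)=1$, the equilibrium states of $-\rho=g-h_Y$ coincide with those of $g$, and by \S\ref{s.flows} the MMEs of $(X_\rho,\Phi)$ are precisely the lifts of the equilibrium states of $g$, i.e. the lifts of the MMEs of $Y$.

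Next I reduce the construction of $g$ to an attainment problem on the closed subspace $V:=\{\phi\in C(X):\phi|_Y\equiv 0\}$. Fix a continuous $u\colon X\to[0,\infty)$ with $u^{-1}(0)=Y$, say $u=\dist(\cdot,Y)$. I claim it is enough to find $g_0\in V$ with $P(g_0)=h_Y$: given such $g_0$, set $g:=\min(g_0,0)-\delta u$ for an arbitrary $\delta>0$. Then $g\le 0$, $g|_Y\equiv 0$, and $g<0$ off $Y$. Monotonicity of $P$ together with $g\le g_0$ gives $P(g)\le h_Y$, while testing the MMEs of $Y$ gives $P(g)\ge h_Y$, so $P(g)=h_Y$. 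If $\mu$ is an equilibrium state of $g$ then, using $g\le g_0-\delta u$ and $h_\mu+\int g_0\,d\mu\le P(g_0)=h_Y$,
\[
h_Y=h_\mu+\int g\,d\mu\le h_Y-\delta\int u\,d\mu,
\]
forcing $\int u\,d\mu=0$; thus $\mu$ is supported on $Y$, $h_\mu=h_Y$, and $\mu$ is an MME of $Y$. Conversely every MME of $Y$ is an equilibrium state of $g$, so $g$ has the desired property. The perturbation by $-\delta u$ is exactly what pins the equilibrium states onto $Y$ and yields the vanishing set $g^{-1}(0)=Y$.

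It remains to find $g_0\in V$ with $P(g_0)=h_Y$, that is, to show the infimum of $P$ over $V$ is attained. For $\phi\in V$ one has $P(\phi)\ge\sup_{\mu\in\M(f|_Y)}h_\mu=h_Y$, so $\inf_V P\ge h_Y$. Testing $\phi=-Nu$: an equilibrium state $\nu_N$ of $-Nu$ satisfies $N\int u\,d\nu_N=h_{\nu_N}-P(-Nu)\le h_{\rm top}(f)-h_Y$, hence $\int u\,d\nu_N\to 0$; any weak$^\ast$ limit of $(\nu_N)$ is supported on $Y$, so upper semi-continuity yields $\limsup_N P(-Nu)\le\limsup_N h_{\nu_N}\le h_Y$, and $\inf_V P=h_Y$. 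Equivalently, I must find a point of $V$ at which $P$ carries a tangent functional supported on $Y$: such a tangent $\nu$ is an equilibrium state with $h_\nu=P(g_0)\le h_Y$, which together with $P(g_0)\ge h_Y$ forces $P(g_0)=h_Y$.

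This last step is the heart of the matter, and it is where Proposition \ref{Israel} is essential. A fixed MME $\mu_0$ of $Y$ is $P$-bounded (all invariant measures are) and, being supported on $Y$, annihilates $V$. Applying Proposition \ref{Israel} with $Q=P$, the subspace $V$, and $\varepsilon=1/n$ produces $\phi_n\in V$ and tangents $\nu_n$ to $P$ at $\phi_n$ with $\|\nu_n\|_{C^\ast(V)}<1/n$; any weak$^\ast$ limit of $(\nu_n)$ then annihilates $V$ and, by upper semi-continuity, is an MME of $Y$. The genuine obstacle is that $\inf_V P$ is \emph{not} attained by any soft compactness argument: $V$ is infinite-dimensional and non-coercive, and the natural minimizing sequence $-Nu$ does not converge in $C(X)$ (indeed $\|\phi_n\|$ is expected to blow up). Thus one cannot merely pass to a limit of the $\phi_n$; instead one must run the nested-set construction from the proof of Proposition \ref{Israel} coherently as $\varepsilon\to 0$, or a comparable limiting device, to extract an honest function $g_0\in V$ carrying an \emph{exact} tangent supported on $Y$. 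Guaranteeing convergence of the constructed functions in $C(X)$ and that the limiting tangent lands exactly on $Y$ is the main difficulty; the reductions of the preceding paragraphs are then routine.
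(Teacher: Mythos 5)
Your reductions are sound and match the paper's architecture: passing from the roof function to a non-positive function vanishing exactly on $Y$ whose equilibrium states are the MMEs of $Y$, and obtaining that function as $\min(g_0,0)-\delta u$ from some $g_0\in V$ with $P(g_0)=h_Y$, is exactly what the paper does (with $\tau=\min\{0,\phi\}-\xi$ and $\rho=P(\tau)-\tau$). Your computation that $\inf_V P=h_Y$ via the test functions $-Nu$ is also correct. But the proof is not complete: you explicitly leave open the attainment of this infimum, i.e.\ the existence of $g_0\in V$ with $P(g_0)=h_Y$, calling it ``the main difficulty'' and proposing to run the nested-set construction ``coherently as $\varepsilon\to 0$, or a comparable limiting device.'' That is the whole content of the theorem, and the proposed limiting strategy is not carried out and is not expected to work as stated, since (as you yourself note) the functions $\phi_n$ produced by Proposition \ref{Israel} with $\varepsilon=1/n$ need not converge in $C(X)$.

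The idea you are missing is that no limit in $\varepsilon$ is needed: a \emph{single} application of Proposition \ref{Israel} with a fixed $\varepsilon<1$ (the paper takes $\varepsilon=1/2$), centered at any invariant $\mu$ with $\mu(Y)=1$, already suffices. It yields $\phi\in V$ and a tangent $\nu$ to $P$ at $\phi$ with $\bigl|\int\psi\,d\nu\bigr|\le\frac12\|\psi\|$ for all $\psi\in V$ (the term $\int\psi\,d\mu$ vanishes because $\mu$ lives on $Y$). One does \emph{not} need $\nu$ to be supported on $Y$; one only needs $\nu(Y)>0$, which follows from the norm bound by regularity of $\nu$ and Urysohn's lemma (if $\nu(Y)=0$, choose closed $F\subset X\setminus Y$ with $\nu(F)>1/2$ and a function $\psi_0\in V$ with $0\le\psi_0\le1$, $\psi_0|_F\equiv1$, contradicting the bound). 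Upper semi-continuity makes $\nu$ an equilibrium state of $\phi$, and the ergodic decomposition of the equilibrium-state set then produces an ergodic equilibrium state $\nu_E$ with $\nu_E(Y)>0$, hence $\nu_E(Y)=1$ by $f$-invariance of $Y$. Since $\phi|_Y\equiv0$, this gives $P(\phi)=h_{\nu_E}(f)\le h_Y$, and combined with $P(\phi)\ge h_Y$ you get $P(\phi)=h_Y$; this $\phi$ is your $g_0$, and the rest of your argument goes through. The passage from ``positive mass on $Y$'' to ``fully supported on $Y$'' via ergodicity is the step that replaces your unproven $\varepsilon\to0$ device.
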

The assumption $h_{\rm top}(f|_Y)>0$ is essential. If $Y$ satisfies $h_{\rm top}(f|_Y)=0$, then by Abramov's formula, the lift of a measure of maximal entropy on $
Y$ has entropy $0$.  Since $h_{\rm top}(\Phi)>0$ for any continuous roof function $\rho$, such a measure can never be an MME for any topological suspension semi-flow over $(X,f)$.
\begin{proof} We denote by $V$ the set of all continuous functions which vanish on $Y$, i.e.
 \begin{equation}\label{Def_V}
 V=\{\psi\in C(X): \psi(x)=0\text{ whenever }x\in Y\}.
\end{equation}
Then $V$ is a closed linear subspace of $C(X)$. Consider $\mu\in\M (f)$ such that $\mu(Y)=1$. The Variational Principle (\ref{VarPr}) implies that any $f$-invariant probability measure is $P$-bounded. Hence, we can apply Proposition \ref{Israel} to the subspace $V$ and measure $\mu$ with $\varepsilon=1/2$.
We obtain the existence of $\phi\in V$ and a tangent $\nu$ to $P$ at $\phi$ such that for any $\psi\in V$ we have
\begin{equation}\label{IsraelThm}
 \left|\int \psi\,d\nu-\int\psi\,d\mu\right|\le\frac12\|\psi\|.
\end{equation}
From our assumption that the entropy map $\mu\mapsto h_\mu(f)$ is upper semi-continuous, and the discussion in \S \ref{tangentfunctionals}, it follows that $\nu$ is an equilibrium state for $\phi$. Note that since $\mu$ is supported on $Y$ and $\psi$ vanishes on $Y$, the second integral in (\ref{IsraelThm}) is zero, and thus $| \int \psi\,d \nu| \le \frac12\|\psi\|$.

First we show that $Y$ has positive $\nu$-measure. Assume to the contrary that $\nu(Y)=0$. Then the complement of $Y$ is open and has full $\nu$-measure. Since $\nu$ is regular, we can approximate the complement of $Y$ by closed sets from below. Let $F\subset X\setminus Y$ be a closed set such that $\nu(F)>1/2$. The sets $Y$ and $F$ are closed and disjoint, thus by Urysohn's Lemma they can be separated via continuous functions. Precisely, there exists a continuous $\psi_0:X\to [0,1]$ such that $\psi_0(x)=0$ for any $x\in Y$ and $\psi_0(x)=1$ for any $x\in F$. We obtain that $\psi_0\in V$ and
\begin{equation*}
  \int\psi_0\,d\nu\ge\int_{F}\psi_0 d\nu=\nu(F)>\frac12,
\end{equation*}
which contradicts (\ref{IsraelThm}).

The set of all equilibrium states for $\phi$ is a compact and convex subset of $\M(f)$  whose
extreme points are the ergodic measures. Therefore, the ergodic decomposition of $\nu$ must contain at least one ergodic equilibrium state $\nu_E$ with $\nu_E(Y)>0$. The ergodicity of $\nu_E$ and $f$-invariance of $Y$ imply that, in fact, $\nu_E(Y)=1$. Since $\phi|_Y\equiv 0$ we obtain
\begin{equation*}
  P(\phi)=h_{\nu_E}(f)+\int \phi\,d\nu_E=h_{\nu_E}(f).
\end{equation*}
Moreover, for any other invariant measure $m$ supported on $Y$ we have $h_{m}(f)=h_m(f)+\int \phi\,d{m}\le P(\phi)=h_{\nu_E}(f)$. We conclude that $P(\phi)=h_{\rm top}(f|_Y)$ and hence every measure of maximal entropy of $f|_Y$ is an equilibrium state of $\phi$.

However, a priori the function $\phi$ might have some other equilibrium states which are not supported on $Y$.
To eliminate this possibility, we make use of a continuous function which satisfies $\xi(x)=0$ for $x\in Y$ and $\xi(x)>0$ for $x \in X \setminus Y$. We define $\tau=\min\{0,\phi\}-\xi$. Then $\tau$ is continuous, $\tau(x)=0$ whenever $x\in Y$ and $\tau(x)<0$  whenever $x\notin Y$. In addition, we have that $\tau\le\phi$, which implies that $P(\tau)\le P(\phi)$.
For $m \in \M(f)$ which is an MME for $f|_Y$, we have $P(\tau) \geq h_m(f)+\int \tau\,d{m} = h_{m}(f) =h_{\rm top}(f|_Y) = P(\phi)$. We conclude that $P(\tau)= P(\phi)$. Thus, $P(\tau)=h_{\rm top}(f|_Y)$ and any MME for $f|_{Y}$ is an equilibrium state for $\tau$. A measure supported on $Y$ which is not an MME for $f|_{Y}$ is clearly not an equilibrium state for $\tau$. For a measure $\lambda \in  \M(f)$ with $\lambda(X\setminus Y )>0$, note that $\int \xi d \lambda >0$, and thus we have
\begin{align*}
  h_{\lambda}(f)+\int \tau\,d{\lambda} & \leq h_{\lambda}(f)+\int \phi d \lambda - \int \xi d \lambda \\
   & < h_{\lambda}(f)+\int \phi\,d{\lambda}\\
   &\le P(\phi).
\end{align*}
Hence $\lambda$ is not an equilibrium state for $\tau$. We conclude that the set of equilibrium states of $\tau$ is exactly the set of measures of maximal entropy for $Y$.
We have constructed a continuous $\tau:X\to\R$ satisfying
\begin{enumerate}
  \item $\tau(x)=0$ for any $x\in Y$ and $\tau(x)<0$ for any $x\notin Y$;
  \item $P(\tau)=h_{\rm top}(f|_Y)$;
  \item the set of equilibrium states for $\tau$ is exactly the set of measures of maximal entropy for $Y$.
\end{enumerate}
We define $\rho: X \to (0, \infty)$ by $\rho=P(\tau) - \tau$. The function $\rho$ is positive because $\tau-P(\tau) = \tau - h_{\rm top}(f|_Y)<0$.  The function $-\rho$ has the same set of equilibrium states as $\tau$, and we have $P(-\rho) = P(\tau-P(\tau)) =0$. It follows from the discussion in \S \ref{s.flows} that the measures of maximal entropy for the suspension semi-flow with roof function $\rho$ are exactly the lifts of the measures of maximal entropy for $Y$.
\end{proof}

\section{ MMEs for suspensions over the full shift} \label{s.applications}
We now apply our main result, mainly in the case of suspension flows over the full shift on a finite alphabet.

\subsection{Support for unique MME}
Theorem \ref{main} allows for much flexibility in specifying the support of a unique measure of maximal entropy. Recall that a system is \emph{intrinsically ergodic} if it has a unique MME.
\begin{cor} \label{cor1}
Let $(X, f)$ be a compact topological dynamical system such that $h_{\rm top}(f)>0$ and the entropy map $\mu\mapsto h_\mu(f)$ is upper semi-continuous.  Let $Y$ be a closed $f$-invariant subset so that $h_{\rm top}(f|_Y)>0$ and $f|_Y$ is intrinsically ergodic. There exists a continuous $\rho:X\to (0,\infty)$ so that the suspension semi-flow $(X_\rho,\Phi)$ is intrinsically ergodic and its unique measure of maximal entropy is supported on the lift of $Y$ to $X_\rho$.
\end{cor}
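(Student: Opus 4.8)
The plan is to deduce this directly from Theorem \ref{main}, the only additional work being to check that intrinsic ergodicity of the base subsystem transfers through the lift correspondence and that the lift of a measure supported on $Y$ lives on the lift of $Y$.

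First I would apply Theorem \ref{main} to the pair $(X,f)$ together with the subset $Y$. The hypotheses of the corollary, namely $h_{\rm top}(f)>0$, upper semi-continuity of the entropy map, $Y$ closed and $f$-invariant, and $h_{\rm top}(f|_Y)>0$, are precisely the hypotheses of the theorem. Hence I obtain a continuous roof function $\rho:X\to(0,\infty)$ such that the measures of maximal entropy for the suspension semi-flow $(X_\rho,\Phi)$ are exactly the lifts $\tilde\mu$ of the measures of maximal entropy $\mu$ for $f|_Y$.

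Next I would use the intrinsic ergodicity hypothesis: since $f|_Y$ has a unique measure of maximal entropy, which I denote $\mu_Y$, the set of measures of maximal entropy for $f|_Y$ is the singleton $\{\mu_Y\}$. Recalling from \S\ref{s.flows} that the lifting map $\mu\mapsto\tilde\mu$ given by (\ref{lifting of measures}) is a bijection from $\M(f)$ onto $\M(\Phi)$, the set of measures of maximal entropy for the flow is the image of $\{\mu_Y\}$ under this bijection, and is therefore equal to $\{\tilde\mu_Y\}$. Thus $(X_\rho,\Phi)$ has a unique measure of maximal entropy and so is intrinsically ergodic.

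Finally I would verify the support claim. Since $\mu_Y$ is supported on $Y$, the product measure $\mu_Y\times m$ restricted to $X_\rho$ is supported on $\{(x,s):x\in Y,\,0\le s\le\rho(x)\}/\!\sim$, which is exactly the lift of $Y$ to $X_\rho$; hence $\tilde\mu_Y$ is supported there, as required. I do not anticipate any genuine obstacle: the entire analytic content of the statement is supplied by Theorem \ref{main}, and the remaining points (uniqueness via the bijection, and the elementary support computation from the definition of the lift) are routine. The only place demanding any care is confirming that the bijectivity of $\mu\mapsto\tilde\mu$ and its respecting of entropy-maximization, both established in \S\ref{s.flows}, legitimately promote ``unique MME on $Y$'' to ``unique MME for $\Phi$''.
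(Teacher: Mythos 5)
Your proposal is correct and matches the paper's approach exactly: the paper states that the corollary follows immediately from Theorem \ref{main}, and your verification that the lift bijection sends the unique MME on $Y$ to the unique MME of the flow, supported on the lift of $Y$, is precisely the (routine) content being invoked.
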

This follows immediately from Theorem \ref{main}. Examples of suitable $Y \subset X$ include
expansive subsystems with specification \cite{rB74}. For example, $Y$ could be a transitive horseshoe inside a smooth non-uniformly hyperbolic system $(X,f)$. Corollary \ref{cor1} can be applied to the full shift and a positive entropy uniquely ergodic subsystem to give an example of a unique MME supported on a minimal set. Explicit examples of such subsystems were provided by Grillenberger \cite{cG73}.  Another example where the structure of $Y$ is more complex is to let $(X,f)$ be the full shift on $\{0, 1, \ldots, n\}$, and $Y=\Sigma_{\beta}$ be the $\beta$-shift for some $\beta \in (1, n)$. The $\beta$-shift is intrinsically ergodic, but typically does not have the specification property.

\subsection{Two ergodic MMEs on a transitive component} We now turn to the phenomenon of non-uniqueness of MME.  Examples of suspension flows over the full shift with non-unique MME were given in \cite{KTh}, however each MME was supported on a different transitive component. Theorem \ref{main} gives us the following corollary.
\begin{cor}There are examples of suspension
flows over the full shift on a finite alphabet which have
two distinct ergodic measures of maximal entropy with the same support.
\end{cor}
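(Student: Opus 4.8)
The plan is to apply Theorem \ref{main} with base $(X,f)=(\Sigma,\sigma)$ a full shift on a finite alphabet and with $Y\subset\Sigma$ a subshift chosen so that $f|_Y$ carries two distinct ergodic measures of maximal entropy \emph{sharing a common support}. Given such a $Y$, Theorem \ref{main} produces a continuous roof $\rho$ for which the MME of the suspension flow $(X_\rho,\Phi)$ are exactly the lifts of the MME of $f|_Y$; the two sought-after ergodic MME of the flow will then be the lifts $\tilde\mu_1,\tilde\mu_2$ of the two ergodic MME $\mu_1,\mu_2$ of $f|_Y$. This is precisely the feature which distinguishes the present result from \cite{KTh}, where the two MME were necessarily forced onto different transitive components.

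First I would dispatch the routine verifications. The full shift is expansive, so its entropy map is upper semi-continuous, and $h_{\rm top}(\sigma)>0$; provided $Y$ has positive entropy, Theorem \ref{main} applies. The lifting map $\mu\mapsto\tilde\mu$ of (\ref{lifting of measures}) is a bijection $\M(f)\to\M(\Phi)$ which carries ergodic measures to ergodic measures, so $\tilde\mu_1\neq\tilde\mu_2$ are distinct ergodic measures, and they are MME for the flow by Theorem \ref{main}. For the supports, I would observe that if $\operatorname{supp}\mu_1=\operatorname{supp}\mu_2=:Z\subset Y$, then by (\ref{lifting of measures}) both $\tilde\mu_1$ and $\tilde\mu_2$ are supported precisely on the lift $\{(x,s):x\in Z,\ 0\le s\le\rho(x)\}/\!\sim$ of $Z$ to $X_\rho$; hence the two ergodic MME of the flow automatically have the same support.

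The heart of the matter is to exhibit the subshift $Y$: a transitive subshift of a finite full shift, with positive entropy, possessing two fully supported ergodic MME. For this I would take $Y$ to be the Dyck shift $D_N$ (with, say, $N=2$, regarded inside the full shift on its $2N$ symbols), introduced by Krieger. The Dyck shift is topologically mixing with $h_{\rm top}(D_N)=\log(N+1)>0$, and it possesses exactly two ergodic measures of maximal entropy, each of which has full support. Consequently $\mu_1,\mu_2$ both have support equal to all of $Y=D_N$, so $Z=Y$ and the mechanism of the previous paragraph delivers two distinct, fully supported, ergodic MME for the flow whose common support is the suspension of $D_N$.

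The main obstacle is exactly the \emph{full-support} property of the two maximal measures of $D_N$: topological transitivity (indeed mixing) of $Y$ does not by itself force an MME to charge every cylinder, so establishing same support is not automatic from transitivity. To see it, one must appeal to the explicit (quasi-)Markovian description of the two maximal measures of the Dyck shift, from which it follows that each of them assigns positive mass to every admissible word. Once this input is granted, the upper semi-continuity hypothesis, the bijectivity and ergodicity-preservation of the lift, and the equality of supports are all routine, and the corollary follows.
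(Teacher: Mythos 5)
Your proposal is correct and follows essentially the same route as the paper: apply Theorem \ref{main} with the full shift on four symbols as base and the Dyck shift as $Y$, using Krieger's result that the Dyck shift has entropy $\log 3$ and exactly two ergodic, fully supported MMEs. The supporting observations about expansiveness, the lift bijection, and supports are the same routine verifications the paper relies on.
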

This can be achieved by letting $X$ be the full shift on four symbols, and taking the subshift $Y$ to be the Dyck shift. We recall the simple description of this shift, given in terms of parentheses and brackets. We split the alphabet of $X$ into two pairs of matching left and right symbols and denote them by (, ), [, ]. The Dyck shift consists of all sequences where every
opening parenthesis ( must be closed by ) and every opening bracket [ must be closed by ].
 Krieger \cite{Kr} showed that the Dyck shift has topological entropy $\log 3$ and admits exactly two ergodic measures of maximal entropy, both are fully supported and Bernoulli. An application of Theorem \ref{main} gives a suspension flow over the full shift on four symbols with two measures of maximal entropy which have the same support. Furthermore, both of these measures are isomorphic to the product of a Bernoulli flow and a rotational flow (see Ledrappier, Lima, Sarig \cite[Lemma 4.8]{LLS}).

There exist minimal subshifts with positive entropy which are not uniquely ergodic. These examples are commented on in, for example, \cite[p.157]{DGS}, and \cite{kP86}. Explicit examples can be constructed by modifying Grillenberger's arguments for uniquely ergodic positive entropy subshifts. These are more difficult to construct than the Dyck example. However, they allow us to strengthen the conclusion of the Corollary from `the same support' to `both supported on the same minimal set'.

\subsection{Cardinality for the set of MME}
In \cite{Kr}, Krieger  extends the Dyck shift construction described above to get examples of transitive subshifts on a finite alphabet with positive entropy and multiple Bernoulli measures as maximal measures. This construction shows that there is no upper bound on the number of possible measures of maximal entropy for a suspension
flow over a full shift, and we can ensure that these MME all have the same support and are isomorphic to the product of a Bernoulli flow and a rotational flow.

The first example of a transitive positive entropy subshift with any prescribed finite number of ergodic measures of maximal entropy was given by Shtilman in \cite{Sh}. An elegant construction was given by Haydn \cite{H}, who showed that for any $L\in\mathbb{N}$ there is a topologically mixing subshift on $2L+1$ symbols with positive topological entropy and $L$ distinct ergodic entropy maximizing measures.

It is also possible for a transitive subshift on a finite alphabet to have infinitely many maximizing measures. In \cite[Lemma~8]{jB05}, Buzzi constructed a subshift on three symbols with topological entropy $\log 2$ which supports countably many ergodic measures with entropy $\log 2$. 
A simple explicit example of a transitive shift with uncountably many ergodic MME is also given in Buzzi \cite[Proof of Lemma~17]{jB05}.
Applying Theorem \ref{main} to the full shift and the subshifts described above, we obtain the following result.
\begin{cor}
For suspension flows over the full shift on a finite alphabet, the set
of ergodic measures of maximal entropy can have any finite cardinality, be countably infinite, or be uncountably
infinite.
\end{cor}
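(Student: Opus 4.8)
The plan is to deduce each of the three cardinality statements from Theorem \ref{main} by choosing the subshift $Y$ appropriately inside a full shift $X$ on a sufficiently large finite alphabet. First I would verify the hypotheses of the theorem for the base: the full shift on a finite alphabet is expansive, and expansive systems have upper semi-continuous entropy map (see \cite{pW82}), and it clearly has positive topological entropy. Any subshift $Y$ on $k$ symbols is a closed shift-invariant subset of the full shift on $k$ symbols, so for each of the examples below the pair consisting of the ambient full shift and the subsystem $f|_Y$ lies within the scope of Theorem \ref{main}, provided $h_{\rm top}(f|_Y)>0$, which holds in every case considered.

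The central observation, which I would record at the outset, is that the lifting correspondence $\mu\mapsto\tilde\mu$ of (\ref{lifting of measures}) restricts to a bijection between the ergodic elements of $\M(f)$ and the ergodic elements of $\M(\Phi)$. Since $\mu\mapsto\tilde\mu$ is an affine bijection of the simplex $\M(f)$ onto the simplex $\M(\Phi)$, it carries extreme points to extreme points, and the extreme points of each simplex are exactly its ergodic measures. Combined with the conclusion of Theorem \ref{main} that the MMEs for the flow $\Phi$ are precisely the lifts of the MMEs for $f|_Y$, this shows that the cardinality of the set of ergodic MMEs for the suspension flow equals the cardinality of the set of ergodic MMEs for $f|_Y$. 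It therefore suffices to exhibit, for each target cardinality, a positive-entropy subshift on a finite alphabet realizing exactly that number of ergodic MMEs.

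For the three cases I would invoke the constructions recalled in the preceding discussion. For any finite $L$, take $Y$ to be Haydn's topologically mixing subshift on $2L+1$ symbols with positive entropy and exactly $L$ ergodic MMEs \cite{H} (Shtilman's example \cite{Sh} serves equally well); for a countably infinite set of ergodic MMEs, take $Y$ to be Buzzi's subshift on three symbols of entropy $\log 2$ carrying countably many ergodic measures of entropy $\log 2$ \cite[Lemma~8]{jB05}; and for an uncountable set, take $Y$ to be Buzzi's subshift with uncountably many ergodic MMEs \cite[Proof of Lemma~17]{jB05}. In each case $Y$ is realized as a subshift of the full shift on the relevant alphabet, and Theorem \ref{main} produces a continuous roof function $\rho$ whose suspension flow has precisely the prescribed number of ergodic MMEs.

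The content beyond the application of Theorem \ref{main} is light, so the only step requiring genuine care is the claim that lifting preserves ergodicity, and I expect that to be the main (though routine) point to justify. Concretely, one should confirm that $\mu$ is ergodic for $f$ if and only if $\tilde\mu$ is ergodic for $\Phi$; this is standard for suspension flows with roof bounded away from zero, and follows from the affine bijection between the two simplices of invariant measures together with the characterization of ergodic measures as extreme points.
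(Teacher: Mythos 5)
Your proposal is correct and follows essentially the same route as the paper: apply Theorem \ref{main} to the full shift with $Y$ taken to be Haydn's (or Shtilman's) subshifts for the finite cases and Buzzi's subshifts for the countable and uncountable cases. The extra point you flag---that the affine bijection $\mu\mapsto\tilde\mu$ preserves ergodicity and hence the cardinality of the set of ergodic MMEs---is a routine fact the paper leaves implicit, and your justification of it is fine.
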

We remark that the existence of suspension flows in the above class for which the set of ergodic MMEs is uncountable has been independently obtained in a preprint by Iommi and Velozo \cite{IV}.

\bibliographystyle{plain}
\bibliography{nonuniqueMMEbiblio}

\end{document}